\newtheorem{thm}{Theorem}[section]
\newtheorem{cor}[thm]{Corollary}
\newtheorem{lem}[thm]{Lemma}
\newtheorem{exam}[thm]{Example}
\numberwithin{equation}{section}
\begin{document}

\title{generalized Jacobson's Lemma in a Banach algebra}

\author{Huanyin Chen}
\author{Marjan Sheibani}
\address{
Department of Mathematics\\ Hangzhou Normal University\\ Hang -zhou, China}
\email{<huanyinchen@aliyun.com>}
\address{Women's University of Semnan (Farzanegan), Semnan, Iran}
\email{<sheibani@fgusem.ac.ir>}

\subjclass[2010]{15A09, 47A11.} \keywords{Generalized Drazin inverse; Drazin inverse; Group inverse; Generalized Jacobson's Lemma.}

\begin{abstract}
Let $\mathcal{A}$ be a Banach algebra, and let $a,b,c\in \mathcal{A}$ satisfying $$a(ba)^2=abaca=acaba=(ac)^2a.$$ We prove that $1-ba\in \mathcal{A}^d$ if and only if $1-ac\in \mathcal{A}^d$. In this case, $(1-ac)^{d}=\big[1-a(1-ba)^{\pi}(1-\alpha (1+ba))^{-1}bac\big](1+ac)+a(1-ba)^dbac$. This extends the main result on g-Drazin inverse of Corach (Comm. Algebra, {\bf 41}(2013), 520--531).\end{abstract}

\maketitle

\section{Introduction}

Let $\mathcal{A}$ be a complex Banach algebra with identity. An element $a\in \mathcal{A}$ has g-Drazin inverse in case there exists $x\in \mathcal{A}$
such that $$x=xax, ax=xa, a-a^2x\in \mathcal{A}^{qnil}.$$ The preceding $x$ is unique if it exists, we denote it by $a^{d}$. Here, $\mathcal{A}^{qnil}=\{a\in \mathcal{A}~|~1+ax\in \mathcal{A}^{-1}~\mbox{whenever}~ax=xa\}$, where $\mathcal{A}^{-1}$ stands for the set of all invertible elements of $\mathcal{A}$.

For any $a,b\in \mathcal{A}$, Jacobson's Lemma for invertibility states that $1-ab\in \mathcal{A}^{-1}$ if and only if $1-ba\in \mathcal{A}^{-1}$ and $(1-ba)^{-1}=1+b(1-ab)^{-1}a$ (see~\cite[Lemma 1.4]{M}). Jacobson's Lemma plays an important role in matrix and operator theory. Let $a,b\in \mathcal{A}$. Zhuang et al. proved the Jacobson's Lemma for g-Drazin inverse. That is, it was proved that $1-ab\in \mathcal{A}^d$ if and only if $1-ba\in \mathcal{A}^d$ and $$(1-ba)^d=1+b(1-ab)^da$$ (see~\cite[Theorem 2.3]{Z}). Corach et al. generalized Jacobson's Lemma for g-Drain inverse to the case that $aba=aca$ (see~\cite[Theorem 1]{C}).

The motivation of this paper is to present a new generalized Jacobson's lemma for generalized Drazin inverses. We thereby extend ~\cite[Theorem 2.3]{Z} to a wider case.
Let $a,b\in \mathcal{A}$. In ~\cite[Theorem 2.2]{C1}, the authors considered the condition $$a(ba)^2=abaca=acaba=(ac)^2a$$ for Cline's formula, i.e., $ba\in \mathcal{A}^d$ if and only if $ac\in \mathcal{A}^d$ and $(ba)^d=b[(ac)^d]^2a$. Common local spectral properties for bounded linear operators under the preceding conditions was investigated in ~\cite{ZG}. This raises a problem if Jacobson's lemma for generalized Drazin inverse hold under such wider condition. We shall give a confirmative answer to this problem. That is, If $a,b,c,\in \mathcal{A}$ satisfying $a(ba)^2=abaca=acaba=(ac)^2a,$ we prove that $1-ba\in \mathcal{A}^d$ if and only if $\beta=1-ac\in \mathcal{A}^d$. In this case, $(1-ac)^{d}=\big[1-d(-ba)^{\pi}(1-\alpha (1+ba))^{-1}bac\big](1+ac)+a(1-ba)^dbac.$

The Drazin inverse of $a\in \mathcal{A}$, denoted by $a^D$, is the unique element $a^D$ satisfying the following three equations $$a^D=a^Daa^D, aa^D=a^Da, a^k=a^{k+1}a$$ for some $k\in {\Bbb N}$. The small integer $k$ is called the Drazin index of $a$, and is denoted by $i(a)$. Moreover, we prove the generalized Jacobson's lemma for the Drazin inverse under the preceding condition.

Throughout the paper, all Banach algebra are complex with identity. $\mathcal{A}^{D}$ and $\mathcal{A}^{d}$ denote the sets of all Drazin and g-Drazin invertible elements in $\mathcal{A}$ respectively. We use $\mathcal{A}^{nil}$ to denote the set of all nilpotents of Banach algebra $\mathcal{A}$. ${\Bbb C}$ stands for the field of all complex numbers.

\section{generalized Jacobson's lemma}

We come now to the main result of this paper which will be the tool in our following development.

\begin{lem} Let $\mathcal{A}$ be a Banach algebra, let $m\in {\Bbb N}$ and let $a\in \mathcal{A}$. Then $a$ has g-Drazin inverse if and only if there exists $b\in comm(a)$ such that
$$\big[ab-(ab)^2\big]^m=0, a-a^2b\in \mathcal{A}^{qnil}.$$ In this case, $$a^d=(a+1-e)^{-1}e, e=\sum\limits_{i=0}^{m-1}(ab)^{2m-i}(1-ab)^{i}.$$\end{lem}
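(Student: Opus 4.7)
The plan is to prove each direction separately. The forward direction is immediate: take $b = a^d$, so that $b \in comm(a)$, $ab = aa^d$ is idempotent giving $[ab-(ab)^2]^m = 0$ for every $m$, and $a - a^2 b = a - a^2 a^d \in \mathcal{A}^{qnil}$ directly from the defining relations of the g-Drazin inverse.

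For the converse, set $p = ab = ba$ and $q = 1-p$, so the hypotheses become $p^m q^m = 0$ and $aq \in \mathcal{A}^{qnil}$. The heart of the argument is to construct, as a polynomial in $p$, a spectral idempotent $e$ for $p$ at the eigenvalue $1$. Expand $1 = (p+q)^{2m-1}$ and split the sum at $i = m$: the low-$i$ part $e$ has every term of $p$-exponent $\geq m$ and the high-$i$ part $f$ has every term of $q$-exponent $\geq m$. Since any monomial $p^s q^t$ with $s,t \geq m$ vanishes (being a multiple of $p^m q^m$), we get $ef = fe = 0$ and hence $e^2 = e$. The closed form in the statement realizes this idempotent after simplification modulo $p^m q^m = 0$, and in particular satisfies $q^m e = 0$ and $p^m(1-e) = 0$, so that $qe$ and $p(1-e)$ are nilpotent of order $\leq m$.

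With $e$ in hand, the remaining verifications are routine. Since $(ae)(be) = (be)(ae) = pe = e - qe$ and $qe$ is nilpotent, $pe$ is invertible in $e\mathcal{A}e$ via a finite Neumann series, and consequently so is $ae$. Also $a(1-e) = ap(1-e) + aq(1-e)$ is a commuting sum of a nilpotent element (as $(p(1-e))^m = 0$) and a quasinilpotent one (the qnil $aq$ times the commuting idempotent $1-e$), hence is quasinilpotent. Therefore $a + 1 - e$ is invertible, decomposing across $e$ into $ae$ on the $e$-corner and $1 + a(1-e)$ on the $(1-e)$-corner. Setting $x = (a+1-e)^{-1} e$ and using that $e$ commutes with $a$, one obtains $(a+1-e)e = ae$, whence $ax = xa = e$, $xax = x$, and $a - a^2 x = a(1-e) \in \mathcal{A}^{qnil}$, so $x$ is the g-Drazin inverse of $a$.

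The main obstacle is to reconcile the explicit closed form of $e$ displayed in the statement with the spectral idempotent constructed above and, in particular, to extract the annihilation relations $q^m e = 0$ and $p^m(1-e) = 0$ from this formula. This reduces to a careful polynomial calculation in the quotient ring $\mathbb{C}[p,q]/(p+q-1, p^m q^m)$, driven by the identity $(p+q)^{2m-1} = 1$ together with the vanishing of every mixed monomial $p^sq^t$ with $s,t \geq m$.
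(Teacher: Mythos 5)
Your proposal is correct, and its skeleton coincides with the paper's: the forward direction via $b=a^d$, and the converse by splitting the binomial expansion of $1=(ab+(1-ab))^{N}$ at $i=m$ to manufacture an idempotent $e$ commuting with $a$, then verifying $a(1-e)\in\mathcal{A}^{qnil}$ and $a+1-e\in\mathcal{A}^{-1}$. Where you genuinely diverge is the invertibility of $a+1-e$: the paper produces an explicit approximate inverse $c+1-e$ with $c=bab$ and checks that $(a+1-e)(c+1-e)$ is a sum of an invertible element with a commuting nilpotent and a commuting quasinilpotent, which requires several telescoping identities; you instead use the Peirce decomposition relative to $e$, getting invertibility of $ae$ in the corner $e\mathcal{A}e$ from the finite Neumann series for $pe=e-qe$ and invertibility of $(1-e)+a(1-e)$ from quasinilpotence of $a(1-e)$. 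Your route is cleaner and avoids the computations in the paper (which, as printed, contain several slips). Two small points. First, you expand $(p+q)^{2m-1}$ while the paper uses $(p+q)^{2m}$; this is immaterial, since both low-$i$ parts are idempotents congruent to $p$ modulo the nil ideal generated by $p-p^2$, hence equal (uniqueness of idempotent lifts in a commutative ring), and your argument runs verbatim with either exponent. Second, the ``main obstacle'' you defer at the end is not actually an obstacle: the relations $q^m e=0$ and $p^m(1-e)=0$ follow at once from your own observation that every monomial of $e$ has $p$-exponent at least $m$ and every monomial of $1-e$ has $q$-exponent at least $m$, so no further polynomial computation is needed. Do note that the formula for $e$ displayed in the statement omits the binomial coefficients $\binom{2m}{i}$ that appear in the paper's proof; without them the displayed expression is not an idempotent in general, so the coefficients are essential to any correct reading of the lemma.
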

\begin{proof} $\Longrightarrow $ Since $a\in \mathcal{A}^d$, there exists $b\in comm(a)$ such that $$b=bab, a-a^2b\in \mathcal{A}^{qnil}.$$ Hence $ab=(ab)^2$.
Moreover, we have $e=\sum\limits_{i=0}^{m-1}\left(
\begin{array}{c}
2m\\
i
\end{array}
\right)(ab)^{2m-i}(1-ab)^{i}=ab$, and then $(a+1-e)^{-1}e=(a+1-ab)^{-1}ab=(a+1-ab)^{-1}(a+1-ab)b=b=a^d$, as desired.

$\Longleftarrow $ Let $$e=\sum\limits_{i=0}^{m-1}\left(
\begin{array}{c}
2m\\
i
\end{array}
\right)(ab)^{2m-i}(1-ab)^{i}, f=\sum\limits_{i=0}^{m-1}
\left(
\begin{array}{c}
2m\\
i
\end{array}
\right)(ab)^{2m-i}(1-ab)^{i}.$$ Since $\big[ab-(ab)^2\big]^{m}=0$, we have
$(ab)^m(1-ab)^m=0$, and so $$e+f=[ab+(1-ab)]^{2m}=1, ef=fe=0.$$ Hence, $e^2=e^2+ef=e$.
Clearly, $a,b$ and $e$ commute one another. Then we have $$\begin{array}{lll}
w:&=&ab-e\\
&=&\big[ab-(ab)^2\big]+(ab)\big[ab-(ab)^2\big]+\cdots +(ab)^{2m-2}\big[ab-(ab)^2\big]\\
&-&\sum\limits_{i=1}^{m-1}\sum\limits_{i=0}^{m-1}
\left(
\begin{array}{c}
2m\\
i
\end{array}
\right)(ab)^{2m-i}(1-ab)^{i}.
\end{array}$$
Hence $w^m=0$. By hypothesis, $a-a^2b=a(1-ab)=a(1-e)-aw\in \mathcal{A}^{qnil}$, and so $a(1-e)\in \mathcal{A}^{qnil}$.
Let $c=bab$. Then $$\begin{array}{lll}
ac+1-e&=&1+(ab)^2-(ab)^{2m}-\sum\limits_{i=1}^{m-1}(ab)^{2m-i}(1-ab)^{i}\\
&=&1+\big[(ab)^2-(ab)^3\big]+\big[(ab)^3-(ab)^4\big]\\
&+&\cdots + \big[(ab)^{2m-1}-(ab)^{2m}\big]-\sum\limits_{i=1}^{m-1}(ab)^{2m-i}(1-ab)^{i}\\
&\in& \mathcal{A}^{-1}.
\end{array}$$ Also we have $$c(1-e)=bab\big[(1-(ab)^{2m})-\sum\limits_{i=1}^{m-1}(ab)^{2m-i}(1-ab)^{i}\in \mathcal{A}^{nil}.$$
We easily check that
$$\begin{array}{lll}
(a+1-e)(c+1-e)&=&ac+a(1-e)+(1-e)c+1-e\\
&=&ac+1-e+a(1-e)+c(1-e)\\
&\in &\mathcal{A}^{-1}.
\end{array}$$
Hence $a+(1-e)\in \mathcal{A}^{-1}$.
Therefore $a^d=(a+1-e)^{-1}e,$ as required.\end{proof}

\begin{thm} Let $\mathcal{A}$ be a Banach algebra, and let $a,b,c\in \mathcal{A}$ satisfying $$a(ba)^2=abaca=acaba=(ac)^2a.$$ Then $\alpha=1-ba\in \mathcal{A}^d$ if and only if $\beta=1-ac\in \mathcal{A}^d$. In this case,
$$\beta^{d}=\big[1-a\alpha^{\pi}(1-\alpha (1+ba))^{-1}bac\big](1+ac)+a\alpha^dbac.$$\end{thm}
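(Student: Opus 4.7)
The plan is to apply Lemma~2.1 to the pair $(\beta,y)$, with $y$ taken to be the candidate element on the right-hand side of the claimed formula. Two preliminary reductions make the task manageable. First, the hypothesis $ababa=abaca=acaba=acaca$ is symmetric under the swap $b\leftrightarrow c$, so combining a proof of the forward implication with the Jacobson-type equivalences $1-ab\in\mathcal{A}^{d}\Leftrightarrow 1-ba\in\mathcal{A}^{d}$ and $1-ac\in\mathcal{A}^{d}\Leftrightarrow 1-ca\in\mathcal{A}^{d}$ from \cite{Z} yields the reverse implication; hence I focus on the direction $\alpha\in\mathcal{A}^{d}\Rightarrow\beta\in\mathcal{A}^{d}$. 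Second, the factor $(1-\alpha(1+ba))^{-1}=((ba)^{2})^{-1}$ is meaningful only inside the corner $\alpha^{\pi}\mathcal{A}\alpha^{\pi}$, where $ba=1-\alpha$ is a unit since its defect $\alpha\alpha^{\pi}$ is quasinilpotent; as this factor is always sandwiched between $\alpha^{\pi}$ and $bac$ in the formula, the interpretation is unambiguous.

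Assuming $x=\alpha^{d}$ exists, I take
\[
y=\bigl[1-a\alpha^{\pi}(1-\alpha(1+ba))^{-1}bac\bigr](1+ac)+ax\,bac
\]
and verify three properties: (i) $y\in comm(\beta)$; (ii) $\beta y$ is an idempotent, so $[\beta y-(\beta y)^{2}]^{1}=0$; and (iii) $\beta-\beta^{2}y\in\mathcal{A}^{qnil}$. By Lemma~2.1, properties (i)--(iii) together yield $\beta\in\mathcal{A}^{d}$ and $\beta^{d}=(\beta+1-e)^{-1}e$ with $e=\beta y$. A direct algebraic manipulation using $\alpha x=x\alpha$, $x=x\alpha x$ and the explicit form of $y$ would then identify this expression with the stated closed form for $\beta^{d}$.

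The engine driving all three verifications is the family of transfer rules that follow from the four-fold equality: it allows $ab$ to be interchanged with $ac$ whenever an $a$ stands immediately to its right, and iterating extends this to rewrites of longer strings such as $a(ba)^{k}$ in terms of $(ac)^{k}a$ and vice versa. I would collect these as a short preliminary lemma. With them in hand, property (i) becomes a term-by-term matching of $\beta y$ and $y\beta$, and property (ii) reduces to an identity among strings in $a,b,c$ that all collapse to a common canonical form. The main obstacle is (iii): expanding $\beta-\beta^{2}y$ produces a large sum of cross-terms, and the challenge is to regroup them, using the transfer rules, into an image of $\alpha-\alpha^{2}x\in\mathcal{A}^{qnil}$ on the $(1-\alpha^{\pi})$-corner together with a genuinely quasinilpotent contribution on the $\alpha^{\pi}$-corner. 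Managing this cancellation cleanly, while keeping track of noncommutation among $a,b,c$, will be the delicate step.
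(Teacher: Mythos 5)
Your overall strategy coincides with the paper's: take $y$ to be the right-hand side of the claimed formula, verify the hypotheses of Lemma~2.1 for the pair $(\beta,y)$, and deduce the converse from the $b\leftrightarrow c$ symmetry of the four-fold hypothesis together with Jacobson's lemma. Your reading of $\alpha^{\pi}(1-\alpha(1+ba))^{-1}$ as the inverse of $(ba)^2$ on the corner $\alpha^{\pi}\mathcal{A}\alpha^{\pi}$ (equivalently, as $\alpha^{\pi}(1-\alpha^{\pi}\alpha(1+ba))^{-1}$, which is what the actual computations use) is also correct. But there are two gaps. The first is your claim (ii), that $\beta y$ is idempotent. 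This is stronger than what is true: one can only show $y\beta(1-y\beta)^2=0$, hence $[y\beta-(y\beta)^2]^2=0$. Writing $1-y\beta=az$ for a suitable $z$, one finds $y\beta(1-y\beta)=(1-y\beta)-a\alpha^{\pi}(1-\alpha^{\pi}\alpha(1+ba))^{-1}bac\,(1-y\beta)$, and the second term does not vanish; it is killed only after multiplication by a further factor of $(1-y\beta)$. This is exactly why Lemma~2.1 is formulated for arbitrary $m$ rather than $m=1$ (which would reduce to the ordinary definition of the g-Drazin inverse). Your plan survives if you weaken (ii) to $[\beta y-(\beta y)^2]^2=0$ and invoke Lemma~2.1 with $m=2$, but the verification as you state it would fail.

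The second, larger gap is that none of the three verifications is actually performed, and they are where the entire content of the theorem lives. Each requires a sustained chain of rewritings driven by $a(ba)^2=abaca=acaba=(ac)^2a$: for instance, (i) rests on identities such as $(bac(ac)aba)(ba)=(ba)(bac(ac)aba)$, which propagate to commutation with $\alpha^{\pi}$ and $\alpha^d$, and on $(ac)(abac)=(abac)(ac)$; (iii) requires transporting quasinilpotency from $(baca)^3(\alpha-\alpha^2\alpha^d)(\cdots)$ to $\beta(1-y\beta)^3$ via Cline's formula before concluding $\beta-\beta^2y\in\mathcal{A}^{qnil}$. You correctly flag (iii) as the delicate step and sketch the right idea (split along $\alpha^{\pi}$ and reduce to $\alpha-\alpha^2\alpha^d\in\mathcal{A}^{qnil}$), but ``managing this cancellation cleanly'' is the proof, not a placeholder for it. Finally, even granting (i)--(iii), Lemma~2.1 only yields $\beta\in\mathcal{A}^d$ with $\beta^d=(\beta+1-e)^{-1}e$; identifying this with $y$ itself needs a further argument (e.g.\ $y=y\beta y$ on the relevant corner), which you mention as ``a direct algebraic manipulation'' but likewise do not supply.
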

\begin{proof} Let $p=\alpha^{\pi},x=\alpha^d$. Then $1-p\alpha (1+ba)\in \mathcal{A}^{-1}$. Let $$y=\big[1-ap(1-p\alpha (1+ba))^{-1}bac\big](1+ac)+axbac.$$

Step 1. $\big[y\beta-(y\beta)^2\big]^2=0$. We see that
$$y\beta=1-(ac)^2-ap(1-p\alpha (1+ba))^{-1}bac\big[1-(ac)^2\big]+axbac(1-ac).$$
We compute that
$${\scriptsize\begin{array}{lll}
y\beta a&=&\big[1-(ac)^2-ap(1-p\alpha (1+ba))^{-1}bac\big[1-(ac)^2\big]+axbac(1-ac)\big]a\\
&=&a-(abac-axbac(1-ac))a-ap(1-p\alpha (1+ba))^{-1}[bac-bac(ac)^2\big]a\\
&=&a-[abac-ax(bac-bacac)]a-ap(1-p\alpha (1+ba))^{-1}[ba-bacaba]ca\\
&=&a-[abac-ax(1-ba)bac]a-ap(1-p\alpha (1+ba))^{-1}[1-(ba)^2]baca\\
&=&a-apbaca-ap(1-p\alpha (1+ba))^{-1}p\alpha (1+ba)baca\\
&=&a-ap(1-p\alpha (1+ba))^{-1}\big[(1-p\alpha (1+ba))+p\alpha (1+ba)\big]baca\\
&=&\big[1-ap(1-p\alpha(1+ba))^{-1}bac\big]a.\\
\end{array}}$$
Set $z=cac+p(1-p\alpha (1+ba))^{-1}bac\big[1-(ac)^2\big]+xbac(1-ac).$ Then $y\beta=1-az$, and so
$$\begin{array}{lll}
y\beta (1-y\beta)&=&y\beta az\\
&=&\big[1-ap(1-p\alpha(1+ba))^{-1}bac\big]az\\
&=&1-y\beta-ap(1-p\alpha(1+ba))^{-1}bac(1-y\beta).
\end{array}$$
Since $acaba=a(caba)=a(caca)=(ac)^2a=(abac)a=abaca$, we have $(baca)(ba)=(ba)(baca)$, and so $(baca)\alpha =\alpha (baca).$ Hence,
$(baca)x=x(baca),$ and then
$$ap(1-p\alpha(1+ba))^{-1}bacaxbac=ap(1-p(1+ba))^{-1}(baca)\alpha xbac=0.$$
Moreover, we have
$${\scriptsize\begin{array}{ll}
&ap(1-p\alpha(1+ba))^{-1}bacy\beta(1-y\beta)\\
=&ap(1-p\alpha(1+ba))^{-1}bac\big[1-ap(1-p\alpha (1+ba))^{-1}bac(1+ac)\big]a(1-ca)z\\
=&\big[ap(1-p\alpha(1+ba))^{-1}bac(1+ac)+ap(1-p\alpha(1+ba))^{-2}(baca)bac(1+ac)\big]a(1-ca)z\\
=&\big[ap(1-p\alpha(1+ba))^{-1}(1+ba)bac+ap(1-p\alpha(1+ba))^{-2}(ba)^2(1+ba)bac\big]a(1-ca)z\\
=&\big[ap(1-p\alpha(1+ba))^{-2}\big[p-p\alpha(1+ba)-p(ba)^2\big](1+ba)bac\big]a(1-ca)z\\
=&0.
\end{array}}$$ Hence, $$\begin{array}{lll}
y\beta (1-y\beta)^2&=&\big[1-y\beta-ap(1-p\alpha(1+ba))^{-1}bac(1-y\beta)\big](1-y\beta)\\
&=&\big[1-y\beta-ap(1-p\alpha(1+ba))^{-1}bac\big](1-y\beta)\\
&=&\big[(1-ap(1-p\alpha(1+ba))^{-1}bac)a-y\beta a\big]z\\
&=&0.\end{array}$$ Therefore $\big[y\beta-(y\beta)^2\big]^2=y^2\beta^2(1-y\beta)^2=0$.

Step 2. $\beta-\beta y\beta\in \mathcal{A}^{qnil}$. Clearly, we have
$$baca(baca)^2p\alpha(1-p\alpha(1+ba))^{-3}=(baca)^3(1-p\alpha(1+ba))^{-3}(\alpha-\alpha^2\alpha)\in \mathcal{A}^{qnil}.$$
By using Cline's formula, we derive $$\begin{array}{ll}
&(1-y\beta )a\alpha bacap(1-p\alpha(1+ba))^{-2}bac\\
=&ap(1-p\alpha(1+ba))^{-1}baca\alpha bacap(1-p\alpha(1+ba))^{-2}bac\\
=&a(baca)^2p\alpha(1-p\alpha(1+ba))^{-3}bac\\
\in & \mathcal{A}^{qnil}.
\end{array}$$

By using Cline's formula again, we have
$$\begin{array}{ll}
&\beta (1-y\beta )^3\\
=&\beta ap(1-p\alpha(1+ba))^{-1}(baca)p(1-p\alpha(1+bd))^{-1}bac (1-y\beta )\\
=&\beta a(baca)p(1-p\alpha(1+bd))^{-2}bac(1-y\beta )\\
=&(1-ac)a(baca)p(1-p\alpha(1+ba))^{-2}bac(1-y\beta )\\
=&a\alpha bacap(1-p\alpha(1+ba))^{-2}bac(1-y\beta )\\
\in & \mathcal{A}^{qnil}.
\end{array}$$
 Therefore $\beta (1-y\beta )^3\in \mathcal{A}^{qnil}$, and so $(\beta-\beta^2y)^3\in \mathcal{A}^{qnil}$,
 This implies that $\beta-\beta^2y\in \mathcal{A}^{qnil}$, as required.

Step 3. $y\in comm(\beta)$. Set $s=ac$. Then we have

Claim 1. $\beta(axbac)=(axbac)\beta.$ We easily check that
$$(bacsaba)ba=bacacababa=babacacaba==ba(bacsaba),$$ and so $$(bacsaba)\alpha=\alpha (bacsaba).$$
Hence $(bacsaba)x=x(bacsaba)$.
Obviously, $$p=(ba)^2p[1-p\alpha (1+ba)]^{-1}=(ba)^4p[1-p\alpha (1+ba)]^{-2}.$$ Then
$$\begin{array}{lll}
s(apbac)&=&sa(ba)^4p[1-p\alpha (1+ba)]^{-2}bac\\
&=&s(ac)^2ababap[1-p\alpha (1+ba)]^{-2}bac\\
&=&a(bacsaba)bap[1-p\alpha (1+ba)]^{-2}bac\\
&=&abap[1-p\alpha (1+ba)]^{-2}(bacsaba)bac\\
&=&abap[1-p\alpha (1+ba)]^{-2}bacs(ac)^3\\
&=&abap[1-p\alpha (1+ba)]^{-2}(ba)^3bacs\\
&=&a(ba)^4p[1-p\alpha (1+ba)]^{-2}bacs\\
&=&(apbac)s.
\end{array}$$ Since $sabac=(acaba)c=(abaca)c=abacs$, we have
$$sa\alpha xbac=sabac-s(apbac)=abacs-(apbac)s=a\alpha xbacs,$$ and so
$$saxbac-sabaxbac=axbacs-abaxbacs.$$

On the other hand, we have
$$\begin{array}{lll}
s(abapbac)&=&sa(ba)^5p[1-p\alpha (1+ba)]^{-2}bac\\
&=&s(ac)^4abap[1-p\alpha (1+ba)]^{-2}bac\\
&=&ababa(bacsaba)p[1-p\alpha (1+ba)]^{-2}bac\\
&=&ababap[1-p\alpha (1+ba)]^{-2}(bacsaba)bac\\
&=&ababap[1-p\alpha (1+ba)]^{-2}bacs(ac)^3\\
&=&ababap[1-p\alpha (1+ba)]^{-2}(ba)^3bacs\\
&=&aba(ba)^4p[1-p\alpha (1+ba)]^{-2}bacs\\
&=&(abapbac)s.
\end{array}$$ Since $sababac=s(ac)^3=(ac)^3s=ababacs$, we have
$$aba\alpha xbacs=saba\alpha xbac.$$ Then we have $$\begin{array}{lll}
ababa\alpha xbacs&=&ac(aba\alpha xbacs)\\
&=&ac(saba\alpha xbac)\\
&=&sacaba\alpha xbac\\
&=&sababa\alpha xbac,
\end{array}$$
and so $$aba(1+ba)\alpha xbacs=saba(1+ba)\alpha xbac,$$ and then
$$abaxbacs-aba(ba)^2xbacs=sabaxbac-saba(ba)^2xbac.$$
One easily checks that
$$\begin{array}{lll}
a(ba)^3xbacs&=&ax(ba)^3bacs\\
&=&axb(ac)^4\beta \\
&=&ax(bacsaba)bac\\
&=&a(bacsaba)xbac\\
&=&(ac)^2sabaxbac\\
&=&sa(ba)^3xbac.
\end{array}$$
This implies that $abaxbacs=sabaxbac$, and therefore $\beta (axbac)=(axbac)\beta.$

Claim 2. $sap(1-p\alpha (1+ba))^{-1}bac(1+ac)=ap(1-p\alpha (1+ba))^{-1}bac(1+ac)\beta $.
Set $t=ap(1-p\alpha (1+ba))^{-1}bac(1+ac).$ Since
$(bacsaba)ba=ba(bacsaba)$, we have
$$\begin{array}{lll}
st&=&sap(1-p\alpha (1+ba))^{-1}bac(1+ac)\\
&=&sa(ba)^4p[1-p\alpha (1+ba)]^{-3}bac(1+ac)\\
&=&(ac)^3sabap[1-p\alpha (1+ba)]^{-3}bac(1+ac)\\
&=&a(ba)^4bap[1-p\alpha (1+ba)]^{-3}bac(1+ac)\\
&=&abap[1-p\alpha (1+ba)]^{-3}(ba)^4bac(1+ac)\\
&=&abap[1-p\alpha (1+ba)]^{-3}b(ac)^5(1+ac)
\end{array}$$
Also we have
$$\begin{array}{lll}
ts&=&ap(1-p\alpha (1+ba))^{-1}bac(1+ac)\beta \\
&=&ap[1-p\alpha (1+ba)]^{-3}(ba)^4bsac(1+ac)\\
&=&abap[1-p\alpha (1+ba)]^{-3}b(ac)^3sac(1+ac)\\
&=&abap[1-p\alpha (1+ba)]^{-3}bs(ac)^4(1+ac)\\
\end{array}$$
Hence, $st=ts$, and so $\beta t=t\beta$.

Therefore $y\in comm(\beta)$, and then $y=\beta^d$, as required.

$\Longleftarrow$ Since $1-ac\in R^d$, it follows by Jacobson's Lemma that $1-ca\in R^d$. Applying the preceding discussion, we obtain that $1-ba\in R^d$, as desired.
\end{proof}

\begin{cor} Let $R$ be a Banach algebra, let $\lambda\in {\Bbb C}$, and let $a,b,c\in R$ satisfying $$a(ba)^2=abaca=acaba=(ac)^2a.$$ Then $\lambda-ba\in R^d$ if and only if $\lambda-ac\in R^d$. In this case,
$$\begin{array}{ll}
&(\lambda-ac)^d\\
=&\begin{cases}
-a((ba)^d)^2c& \lambda=0\\
\big[1-a(1-ba)^{\pi}(\lambda^2-(\lambda-ba)(\lambda+ba))^{-1}bac\big]\\
(\lambda+ac)+\frac{1}{\lambda} a(\lambda-ba)^dbac &\lambda\neq 0.
\end{cases}\end{array}$$\end{cor}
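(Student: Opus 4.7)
The plan is to split into the two cases that appear in the statement and reduce each to a result already in hand. For the case $\lambda=0$, the corollary asks for the equivalence $ba\in R^d \Leftrightarrow ac\in R^d$ together with the formula $(-ac)^d=-a((ba)^d)^2c$, i.e.\ $(ac)^d=a((ba)^d)^2c$. This is precisely Cline's formula under the hypothesis $a(ba)^2=abaca=acaba=(ac)^2a$, established in \cite[Theorem 2.2]{C1} and quoted in the introduction, so nothing new is needed.

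For $\lambda\neq 0$, I will reduce to Theorem 2.2 by a rescaling of $b$ and $c$. Setting $b_\lambda=b/\lambda$ and $c_\lambda=c/\lambda$, a one-line check shows that each of the four expressions in the hypothesis picks up a common factor $1/\lambda^2$, so $(a,b_\lambda,c_\lambda)$ also satisfies $a(b_\lambda a)^2=ab_\lambda a c_\lambda a=ac_\lambda a b_\lambda a=(ac_\lambda)^2 a$. Put $\alpha'=1-b_\lambda a=(\lambda-ba)/\lambda$ and $\beta'=1-ac_\lambda=(\lambda-ac)/\lambda$. Since $\lambda\neq 0$, scalar multiplication preserves g-Drazin invertibility and satisfies $(\alpha')^d=\lambda(\lambda-ba)^d$, $(\alpha')^\pi=(\lambda-ba)^\pi$, and likewise for $\beta'$. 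Applying Theorem 2.2 to $(a,b_\lambda,c_\lambda)$ therefore yields the equivalence $\lambda-ba\in R^d\Leftrightarrow\lambda-ac\in R^d$ immediately.

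For the explicit formula, I substitute into
\[
(\beta')^d=\bigl[1-a(\alpha')^\pi\bigl(1-(\alpha')^\pi\alpha'(1+b_\lambda a)\bigr)^{-1}b_\lambda a c_\lambda\bigr](1+ac_\lambda)+a(\alpha')^d b_\lambda a c_\lambda,
\]
and simplify factor by factor: $b_\lambda a c_\lambda=bac/\lambda^2$, $1+ac_\lambda=(\lambda+ac)/\lambda$, $\alpha'(1+b_\lambda a)=(\lambda-ba)(\lambda+ba)/\lambda^2$ (so that $1-(\alpha')^\pi\alpha'(1+b_\lambda a)=\lambda^{-2}\bigl[\lambda^2-(\lambda-ba)^\pi(\lambda-ba)(\lambda+ba)\bigr]$), and $(\alpha')^d b_\lambda=\lambda(\lambda-ba)^d\cdot b/\lambda=(\lambda-ba)^d b$. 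Multiplying by $1/\lambda$ to pass from $(\beta')^d=\lambda(\lambda-ac)^d$ to $(\lambda-ac)^d$ rearranges everything into the formula displayed in the corollary.

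The only obstacle is bookkeeping — tracking the powers of $\lambda$ that accumulate and checking that the spectral idempotent behaves correctly under scaling — but there is no new conceptual step: once the substitution $b\mapsto b/\lambda$, $c\mapsto c/\lambda$ is in place, the corollary is an algebraic consequence of Theorem 2.2 together with \cite[Theorem 2.2]{C1}.
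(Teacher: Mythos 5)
Your strategy is exactly the paper's: handle $\lambda=0$ by Cline's formula from \cite[Theorem 2.2]{C1}, and for $\lambda\neq 0$ rescale $b\mapsto b/\lambda$, $c\mapsto c/\lambda$, observe that the four-term hypothesis is homogeneous (each side picks up $1/\lambda^2$), and invoke Theorem 2.2. The equivalence $\lambda-ba\in R^d\Leftrightarrow\lambda-ac\in R^d$ is therefore established correctly and by the same route as the paper.

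However, the step you dismiss as ``only bookkeeping'' is precisely where the argument does not close. Using the scaling rules you correctly state, namely $(\beta')^d=\lambda(\lambda-ac)^d$, $(\alpha')^d=\lambda(\lambda-ba)^d$, $(\alpha')^\pi=(\lambda-ba)^\pi$, $b_\lambda ac_\lambda=bac/\lambda^2$ and $1+ac_\lambda=(\lambda+ac)/\lambda$, the substitution into Theorem 2.2 gives
\[
(\beta')^d=\tfrac{1}{\lambda}\bigl[1-a(\lambda-ba)^{\pi}\bigl(\lambda^2-(\lambda-ba)^\pi(\lambda-ba)(\lambda+ba)\bigr)^{-1}bac\bigr](\lambda+ac)+\tfrac{1}{\lambda}\,a(\lambda-ba)^dbac,
\]
and hence $(\lambda-ac)^d=\tfrac{1}{\lambda}(\beta')^d$ carries an overall factor $1/\lambda^2$ on \emph{both} terms. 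This is not the displayed formula, which has no factor on the first term and $1/\lambda$ on the second; the two cannot be reconciled by rearrangement. So your claim that the computation ``rearranges everything into the formula displayed in the corollary'' is false as stated: either you must exhibit the computation and correct the constants, or concede that the asserted identity does not follow. For what it is worth, the paper's own proof contains the same defect — it uses the wrong scaling rule $(\lambda-ac)^d=\lambda(1-ae)^d$ in place of $\tfrac{1}{\lambda}(1-ae)^d$, and even then its constants do not match its displayed formula — and the displayed expression is further suspect because $\lambda^2-(\lambda-ba)(\lambda+ba)=(ba)^2$ need not be invertible, so the spectral idempotent must appear inside the inverse as you (correctly) wrote it. In short: right reduction, but the target formula is not what the reduction produces, and a complete proof must say so.
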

\begin{proof} Case 1. $\lambda =0$. It is obvious by~\cite[Theorem 2.2]{C1}.

Case 2. $\lambda\neq 0$. Let $d=\frac{b}{\lambda}$ and $e=\frac{c}{\lambda}$. It is easy to show that $a(da)^2=adaea=aeada=(ae^2)a$. By virtue of Theorem 2.2, $1-da\in R^d$ if and only if $1-ae\in R^d$. That is, $\lambda-ba\in R^d$ if and only if $\lambda-ac\in R^d$. In this case,
$$\begin{array}{ll}
&(\lambda-ac)^d\\
=&\lambda (1-ae)^d\\
=&\lambda \big[1-a(1-da)^{\pi}(1-(1-da)(1+da))^{-1}dae\big]\\
&(1+ae)+a(1-ba)^ddae\\
=&\big[1-a(1-ba)^{\pi}(\lambda^2-(\lambda-ba)(\lambda+ba))^{-1}bac\big]\\
&(\lambda+ac)+\frac{1}{\lambda} a(\lambda-ba)^dbac,
\end{array}$$ as desired.
\end{proof}

\begin{cor} Let $R$ be a Banach algebra, let $\lambda\in {\Bbb C}$, and let $a,b,c\in R$ satisfying $aba=aca$ Then $\lambda-ba\in R^d$ if and only if $\lambda-ac\in R^d$. In this case,
$$\begin{array}{ll}
&(\lambda-ac)^d\\
=&\begin{cases}
-a((ba)^d)^2c& \lambda=0\\
\big[1-a(1-ba)^{\pi}(\lambda^2-(\lambda-ba)(\lambda+ba))^{-1}bac\big]\\
(\lambda+ac)+\frac{1}{\lambda} a(\lambda-ba)^dbac &\lambda\neq 0.
\end{cases}\end{array}$$\end{cor}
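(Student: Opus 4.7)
The plan is to reduce this corollary directly to the preceding one by observing that the single identity $aba=aca$ implies the four-term identity $a(ba)^2=abaca=acaba=(ac)^2a$ required there. No fresh analytic work is needed; the task is purely to verify this implication and then quote the previous result.

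First I would establish the four-term identity from $aba=aca$. Each of the four length-five words differs from any other by a single occurrence of $aba$ versus $aca$, so the verification is a short chain of substitutions. For instance, $a(ba)^2 = ababa = (aba)ba = (aca)ba = acaba$, and then $acaba = ac(aba) = ac(aca) = (ac)^2 a$; the identity $ababa = abaca$ follows by rewriting the right-hand occurrence of $aba$ instead of the left-hand one. Hence all four expressions coincide.

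Once the four-term condition is in hand, both the equivalence $\lambda-ba\in R^d \Leftrightarrow \lambda-ac\in R^d$ and the explicit formula for $(\lambda-ac)^d$ in the two cases $\lambda=0$ and $\lambda\neq 0$ transfer verbatim from the preceding corollary; in particular the $\lambda=0$ case still comes from Cline's formula and the $\lambda\neq 0$ case still comes from the scaling $d=b/\lambda$, $e=c/\lambda$ used in that proof, which is already absorbed into the statement. The only potential obstacle is symbolic bookkeeping in the substitution chain, but since every step is a single application of $aba=aca$ there is no genuine difficulty: the corollary is essentially a one-line consequence of its predecessor.
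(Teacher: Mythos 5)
Your proposal is correct and is exactly the paper's argument: the paper disposes of this corollary with the single line ``This is obvious by Corollary 2.3,'' and your substitution chain $ababa=(aba)ba=(aca)ba=acaba=ac(aca)=(ac)^2a$ (and likewise $ababa=abaca$) is precisely the verification that $aba=aca$ implies the four-term hypothesis of that corollary. Nothing further is needed.
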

\begin{proof} This is obvious by Corollary 2.3.\end{proof}

\section{Drazin inverse}

As it is known, $a\in \mathcal{A}^D$ if and only if there exists $x\in R$ such that $x=xax, x\in comm(a), a-a^2x\in \mathcal{A}^{nil}$, and so $a^D=a^d$. For the generalized Jacobson's Lemma for Drazin inverse, we have

\begin{thm} Let $\mathcal{A}$ be a Banach algebra, and let $a,b,c\in \mathcal{A}$ satisfying $$a(ba)^2=abaca=acaba=(ac)^2a.$$ Then $\alpha=1-ba\in \mathcal{A}^D$ if and only if $\beta=1-ac\in \mathcal{A}^D$. In this case,
$$\begin{array}{ll}
&\beta^{D}\\
=&\big[1-a\alpha^{\pi}(1-\alpha (1+ba))^{-1}bac\big](1+ac)+a\alpha^Dbac,
\end{array}$$
$$i(1-ba)\leq i(1-ac)+1.$$
\end{thm}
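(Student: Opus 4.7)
The plan is to mirror the proof of Theorem 2.2 step by step, upgrading each ``$\,\cdot\,\in \mathcal{A}^{qnil}$'' conclusion to the stronger ``$\,\cdot\,\in \mathcal{A}^{nil}$'' and then tracking the nilpotency index. Write $p = \alpha^\pi$ and set $k = i(\alpha)$; define $y$ by the same formula as in Theorem 2.2 with $\alpha^D$ in place of $\alpha^d$ (these inverses coincide whenever $\alpha$ is Drazin invertible). Steps~1 and 3 of Theorem 2.2 carry over verbatim, giving $y \in \mathrm{comm}(\beta)$ and $[y\beta - (y\beta)^2]^2 = 0$, so the only new work is in Step~2: we must show $\beta - \beta^2 y \in \mathcal{A}^{nil}$, not merely $\in \mathcal{A}^{qnil}$.

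To get the nilpotency with an explicit index, I would reuse the intermediate element
$$W_1 = (1 - y\beta)\, a\alpha\, baca\, p(1 - p\alpha(1+ba))^{-2}\, bac = a(baca)^2\, p\alpha\, (1 - p\alpha(1+ba))^{-3}\, bac$$
appearing inside Step~2 of Theorem 2.2. The hypothesis gives $baca \cdot ba = b(acaba) = b(abaca) = ba \cdot baca$, so $baca \in \mathrm{comm}(\alpha)$; by the double-commutant property of the Drazin inverse, $baca$ therefore commutes with $\alpha^D$, with $p$, and with $(1 - p\alpha(1+ba))^{-1}$. Combined with $bac \cdot a = baca$, a short induction on $n$ yields
$$W_1^{\,n} = a\, (baca)^{3n-1}\, (p\alpha)^n\, (1 - p\alpha(1+ba))^{-3n}\, bac.$$
Since $p\alpha = \alpha\alpha^\pi$ satisfies $(p\alpha)^k = \alpha^k\alpha^\pi = 0$, we get $W_1^{\,k} = 0$, so $W_1 \in \mathcal{A}^{nil}$ of index at most $k$. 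Writing $\beta(1 - y\beta)^3 = UV$ with $V = 1 - y\beta$ and $W_1 = VU$, Cline's formula in its nilpotent form $(UV)^{n+1} = U(VU)^n V$ delivers $\beta(1-y\beta)^3 \in \mathcal{A}^{nil}$ of index at most $k+1$. Because $1 - y\beta = \beta^\pi$ is idempotent (by Theorem 2.2), $\beta(1-y\beta)^3 = \beta\beta^\pi = \beta - \beta^2 y$, whence $y = \beta^D$ and $i(\beta) \leq i(\alpha) + 1$.

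For the converse together with the stated index bound, I would exploit the symmetry of the hypothesis in $b$ and $c$. Assume $\beta \in \mathcal{A}^D$ with $i(\beta) = m$. Jacobson's Lemma for the Drazin inverse gives $1 - ca \in \mathcal{A}^D$ with $i(1 - ca) = i(1 - ac) = m$, using the well-known fact that Jacobson's Lemma preserves the Drazin index exactly: the nonzero spectral structures of $ab$ and $ba$ agree, and the relevant pole at $0$ of $1 - ab$ and $1 - ba$ comes from the common pole at $1$ of $ab$ and $ba$. Applying the forward direction to the triple $(a, c, b)$, which still satisfies the identity $a(ca)^2 = acaba = abaca = (ab)^2 a$, yields $1 - ab \in \mathcal{A}^D$ with $i(1 - ab) \leq m + 1$. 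One further application of Jacobson gives $\alpha = 1 - ba \in \mathcal{A}^D$ with $i(\alpha) = i(1 - ab) \leq m + 1$, which is precisely $i(1 - ba) \leq i(1 - ac) + 1$. The explicit formula for $\beta^D$ is the one from Theorem 2.2, since g-Drazin and Drazin inverses coincide whenever both exist.

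The main technical obstacle is the index bookkeeping: one needs both the clean product form for $W_1^{\,n}$ (whose derivation rests on the double-commutant observation for $baca$ together with $bac \cdot a = baca$) and the sharp fact that Jacobson's Lemma preserves the Drazin index exactly rather than only up to $\pm 1$. Without the latter, the Jacobson--swap--Jacobson chain in the converse would deliver only $i(1 - ba) \leq i(1 - ac) + 3$, strictly weaker than the stated bound.
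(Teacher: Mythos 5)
Your proposal is correct and follows essentially the same route as the paper: take $y$ from Theorem 2.2, upgrade Step 2 by showing the error term is genuinely nilpotent via the nilpotent form of Cline's formula $(UV)^{n+1}=U(VU)^nV$ together with the commutation of $baca$ with $\alpha$ and the identity $(\alpha\alpha^{\pi})^{i(\alpha)}=0$, yielding $i(\beta)\leq i(\alpha)+1$; the converse is obtained by the same Jacobson--swap--Jacobson symmetry the paper uses. You are in fact somewhat more careful than the paper, which establishes $i(1-ac)\leq i(1-ba)+1$ and leaves the stated inequality $i(1-ba)\leq i(1-ac)+1$ to the (valid) symmetry argument you spell out explicitly.
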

\begin{proof} Let $p=\alpha^{\pi},x=\alpha^D$. In view of Theorem 2.1, $\beta\in R^d$ and $$\beta^d=\big[1-ap(1-p\alpha (1+ba))^{-1}bac\big](1+ac)+dxbac.$$ We shall prove that $\beta^D=\beta^d$.

We will suffice to check $\beta-\beta \beta^d\beta\in \mathcal{A}^{nil}$.
As in the proof of Theorem 2.2, we have
$$\begin{array}{lll}
\beta-\beta \beta^d\beta&=&\beta (1-\beta^d\beta )\\
&=&d\alpha bacap(1-p\alpha(1+ba))^{-2}bac.
\end{array}$$ In light of~\cite[Lemma 2.1]{C1}, we will suffice to prove $$baca\alpha bacap(1-p\alpha(1+ba))^{-2}\in \mathcal{A}^{nil}.$$
Similarly to the discussion in Theorem 2.2, we see that $baca\in comm(\alpha)$, and so $baca, \alpha p$ and $(1-p\alpha(1+ba))^{-2}$ commute one another.
Set $n=i(\alpha)$. Then $$\begin{array}{ll}
&\big[baca\alpha bacdp(1-p\alpha(1+ba))^{-2}\big]^n\\
=&(baca)^2(1-p\alpha(1+ba))^{-2n}(\alpha-\alpha^2\alpha^d)^n\\
=&0;
\end{array}$$ hence,
$$\begin{array}{l}
(\beta-\beta \beta^d\beta)^{n+1}\\
=a\alpha bacdp(1-p\alpha(1+ba))^{-2}\big[baca\alpha bacap(1-p\alpha(1+ba))^{-2}\big]^nbac\\
=0.
\end{array}$$ Thus, we have
$\beta-\beta \beta^d\beta\in \mathcal{A}^{nil}$. Moreover, we have $i(\beta)\leq i(\alpha)+1$, as desired.\end{proof}

The group of $a\in \mathcal{A}$ is the unique element $a^{\#}\in \mathcal{A}$ which satisfies $a=aa^{\#}a, a^{\#}=a^{\#}aa^{\#}.$ We denote the set of all group invertible elements of $\mathcal{A}$ by $\mathcal{A}^{\#}$. As is well known, $a\in \mathcal{A}^{\#}$ if and only if $a\in \mathcal{A}^D$ and $i(a)=1$. We are now ready to prove:

\begin{cor} Let $\mathcal{A}$ be a Banach algebra, and let $a,b,c\in \mathcal{A}$ satisfying $$a(ba)^2=abaca=acaba=(ac)^2a.$$ Then $\alpha=1-ba$ has group inverse if and only if $\beta=1-ac$ has group inverse. In this case, $$\beta^{\#}=\big[1-a\alpha^{\pi}(1-\alpha (1+ba))^{-1}bac\big](1+ac)+a\alpha^{\#}bac.$$\end{cor}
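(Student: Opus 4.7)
The strategy is to deduce Corollary 3.2 from Theorem 3.1 by refining the index bound to show that $i(\beta) = 1$ whenever $i(\alpha) = 1$. Recall that $\alpha \in \mathcal{A}^{\#}$ is equivalent to $\alpha \in \mathcal{A}^{D}$ with $i(\alpha) = 1$, which in turn is equivalent to $\alpha \alpha^{\pi} = 0$; in that case $\alpha^{\#} = \alpha^{D}$.

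Assume first that $\alpha = 1 - ba$ has group inverse. Applying Theorem 3.1 with $\alpha^{D} = \alpha^{\#}$ yields $\beta = 1 - ac \in \mathcal{A}^{D}$ and the candidate element
$$y := \bigl[1 - a\alpha^{\pi}(1-\alpha(1+ba))^{-1}bac\bigr](1+ac) + a\alpha^{\#}bac$$
satisfies $\beta^{D} = y$. In particular, Theorems 2.2 and 3.1 have already established that $y \in \mathrm{comm}(\beta)$ and $y = y\beta y$. The only group-inverse axiom still to be verified is $\beta = \beta y \beta$.

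From the proof of Theorem 3.1 we have the explicit identity
$$\beta - \beta y\beta \;=\; a\,\alpha\,(baca)\,\alpha^{\pi}\,\bigl(1-\alpha^{\pi}\alpha(1+ba)\bigr)^{-2}\,bac.$$
The proof of Theorem 2.2 showed $baca \in \mathrm{comm}(\alpha)$, and since $\alpha^{D}$ (hence $\alpha^{\pi} = 1 - \alpha\alpha^{D}$) commutes with every element of $\mathrm{comm}(\alpha)$, the factor $baca$ commutes with both $\alpha$ and $\alpha^{\pi}$, and hence also with the resolvent factor $(1-\alpha^{\pi}\alpha(1+ba))^{-2}$. Therefore
$$\alpha\,(baca)\,\alpha^{\pi} \;=\; (baca)\,\alpha\alpha^{\pi} \;=\; 0$$
by the group-inverse hypothesis $\alpha\alpha^{\pi} = 0$, so $\beta - \beta y\beta = 0$. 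Combined with the properties already recorded, this forces $y = \beta^{\#}$ and the stated formula follows.

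For the converse, note that the hypothesis $a(ba)^{2} = abaca = acaba = (ac)^{2}a$ is symmetric under the interchange $b \leftrightarrow c$ (the chain of equalities is just read in reverse). Applying the forward direction with $b$ and $c$ swapped gives that $1-ac \in \mathcal{A}^{\#}$ implies $1-ba \in \mathcal{A}^{\#}$. The plan carries no real obstacle: every commutation relation, the construction of $y$, and the identity for $\beta - \beta y\beta$ are available from Theorems 2.2 and 3.1, so the entire extra content of Corollary 3.2 is the one-line observation that $\alpha\alpha^{\pi} = 0$ makes the right-hand side of that identity vanish outright, rather than merely being nilpotent.
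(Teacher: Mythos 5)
Your proposal is correct and follows essentially the same route as the paper: both reduce the corollary to the identity $\beta-\beta\beta^{D}\beta=a\alpha(baca)\alpha^{\pi}(1-\alpha^{\pi}\alpha(1+ba))^{-2}bac$ established in Theorems 2.2 and 3.1, and then observe that $baca$ commutes with $\alpha$ and $\alpha^{\pi}$ so the group-inverse hypothesis $\alpha\alpha^{\pi}=0$ kills the right-hand side, upgrading $\beta^{D}$ to $\beta^{\#}$. Your explicit symmetry argument for the converse is a slightly cleaner justification than the paper's (which leaves that direction implicit), but the substance is the same.
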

\begin{proof} Since $1-ba\in \mathcal{A}^{\#}$, we have $1-ba\in \mathcal{A}^D$. In light of Theorem ???, $1-ac\in \mathcal{A}^D$.
Let $\alpha=1-ba$ and $\beta=1-ac$. Let $p=1-\alpha\alpha^D$. Since $\alpha\in R^{\#}$, we have $\alpha p=\alpha-\alpha^2\alpha^D=0$. As in the proof of Theorem 2.2,
we have $$\begin{array}{l}
\beta-\beta \beta^D\beta\\
=d\alpha bacdp(1-p\alpha(1+ba))^{-2}\big[bacd\alpha bacdp(1-p\alpha(1+ba))^{-2}\big]^nbac\\
=dbacd\alpha p(1-\alpha(1+ba))^{-2}\big[bacd\alpha bacdp(1-p\alpha(1+ba))^{-2}\big]^nbac\\
=0.
\end{array}$$ Obviously, $\beta^D\in comm(\beta)$ and $\beta^D=\beta^D\beta\beta^D$.
Therefore $$\begin{array}{l}
\beta^{\#}=\beta^D\\
=\big[1-a\alpha^{\pi}(1-\alpha (1+ba))^{-1}bac\big](1+ac)+a(1-ba)^{\#}bac.
\end{array}$$ This completes the proof.\end{proof}

\begin{exam}\end{exam} Let $$\begin{array}{c}
A=\left(
\begin{array}{cccc}
1&0&-1&0\\
0&1&0&-1\\
0&0&0&0\\
0&0&0&0
\end{array}
\right), B=\left(
\begin{array}{cccc}
1&1&1&0\\
0&1&0&0\\
1&-1&0&0\\
0&1&0&0
\end{array}
\right),\\
C=\left(
\begin{array}{cccc}
1&-1&0&1\\
0&1&0&0\\
1&1&0&1\\
0&1&0&0
\end{array}
\right)
\in M_4({\Bbb C}).
\end{array}$$ Then we check that
$$A(BA)^2=ABACA=ACABA=(AC)^2A=0_{4\times 4}.$$ But $ABA=
\left(
\begin{array}{cccc}
0&2&0&-2\\
0&0&0&0\\
0&0&0&0\\
0&0&0&0
\end{array}
\right)\neq
\left(
\begin{array}{cccc}
0&-2&0&2\\
0&0&0&0\\
0&0&0&0\\
0&0&0&0
\end{array}
\right)=ACA$. By directly computation, we have $$\begin{array}{c}
(I_4-AC)^{\#}=\left(
\begin{array}{cccc}
1&-2&0&0\\
0&1&0&0\\
0&0&1&0\\
0&0&0&1
\end{array}
\right),\\
(I_4-BA)^{\#}=\left(
\begin{array}{cccc}
2&3&-1&-3\\
0&2&0&-1\\
1&1&0&-1\\
0&1&0&0
\end{array}
\right).
\end{array}$$

\end{document}